\newtheorem{problem}{Problem}
\newtheorem{theorem}{Theorem}
\newtheorem{assumption}{Assumption}
\newtheorem{lemma}{Lemma}
\newtheorem{remark}{Remark}
\newtheorem{definition}{Definition}
\def\rf{\eqref}
\def\no{\nonumber}
\title{\bf Reinforcement Learning for Stochastic LQ Control of Discrete-Time Systems with Multiplicative Noises
}
\author{ Hongdan Li,\ Lucky Qiaofeng Li, Xun Li and \ Zhaorong Zhang
\thanks{H. Li is with College of Electrical Engineering and Automation, Shandong University of Science and Technology, Qingdao, Shandong, P.R.China 266590. L. Li  is with  College of Computing, Data Science, and Society, University of California, Berkeley, CA 94720, USA. X. Li is with Department of Applied Mathematics, The Hong Kong Polytechnic University, Kowloon, Hong Kong, China.
Z. Zhang is with School of Computer Science and Technology, Shandong University, Qingdao 250100, China.
Z. Zhang is the corresponding author.(e-mail: zhaorong.zhang@uon.edu.au) }
}
\begin{document}

\pagenumbering{arabic}
 \setcounter{page}{1}

\pagenumbering{arabic} \thispagestyle{empty} \setcounter{page}{1}

\baselineskip 16pt
\date{}
  \maketitle

{\bf Abstract}
This paper considers a stochastic linear quadratic problem for discrete-time systems with multiplicative noises over an infinite horizon. To obtain the optimal solution, we propose an online iterative algorithm of reinforcement learning based on Bellman dynamic programming principle. The algorithm avoids the direct calculation of algebra Riccati equations. It merely takes advantage of state trajectories over a short interval instead of all iterations, significantly simplifying the calculation process. Under the stabilizable initial values, numerical examples shed light on our theoretical results.

{\bf Keywords} Discrete-time systems; linear quadratic optimal control; multiplicative noises; reinforcement learning.

\section{Introduction}
In recent years, artificial intelligence and machine learning have been prosperously developed and attached great importance to in commercial and research fields. For example, Lin et al. \cite{lin} investigated machine-learning-based methods for bankruptcy prediction problems; Maschler et al. \cite{bm} studied a type of deep learning algorithms for industrial automation systems; by employing deep neural networks, Giusti et al. \cite{ag} designed an image classification approach for perceiving forest or mountain trails. Among them, reinforcement learning (RL) is widely acknowledged as a powerful tool for tackling problems where partial information is unavailable. Generally, RL methods involve a group of agents that interact with the environment and aim to obtain the maximum rewards by adjusting their actions according to the stimuli received from the environment. More specifically, RL is a class of machine learning techniques where adaptive controllers are manipulated to solve optimal control problems in real-time.

Linear quadratic (LQ) optimal control, including LQ regulation and LQ tracking problems, is a fundamental and critical problem arising in scientific and engineering disciplines and has been extensively studied over the past few decades. LQ optimal control can be effectively addressed by model-based methods, including semi-definite programming, eigenvalue decomposition, and iterative methods, where solving the algebraic Riccati equation (ARE) is inevitable. However, to solve AREs, complete system information should be previously known, which is usually unrealistic in practical applications. Therefore, optimal control problems with incomplete system information are still treated as a major obstacle. In this scenario, RL methods, which can be applied to obtain optimal control policies by learning online the solution to the Hamilton-Jacobi-Bellman (HJB) equation, are playing an increasingly important role in solving model-free optimal control in both continuous-time and discrete-time models. In early times, Werbos \cite{werbos1} put forward the so-called adaptive dynamic programming (ADP) methods, which approximate optimal feedback control policies for discrete-time frameworks using system trajectories. Subsequently, RL methods integrated with ADP have become a research emphasis for designing optimal controllers in \cite{werbos2}. For example, Bertsekas and Tsitsiklis \cite{ber} solved discrete-time optimal control by adopting an RL approach named neuro-dynamic programming dependent on offline solution. For continuous-time (CT) cases, Modares and Lewis \cite{lewis1} designed an RL method, which is effective for solving optimal LQ control without the knowledge of system state matrix $A$, to devise the adaptive controllers with actor-critic structure. For constrained nonlinear systems with saturating actuators, Abu-Khalaf et al. \cite{lewis2} proposed a method based on RL to solve the HJB equation and approximate the optimal constrained input state feedback controller. Modares et al. \cite{lewis3} focused on H$_{\infty}$ tracking problems and designed an online off-policy RL method, which is intended to learn the solution to the HJB equation. More recently, Zhang et al. \cite{kunz} developed a novel approach combining RL and decentralized control design to address interconnected systems' tracking control.

Among RL methods, Q-learning is a type of model-free approach that is guaranteed to converge to the optimal solution when implemented in the environment of Markov decision processes. Established by Watkins and Dayan \cite{watkins}, the celebrated Q-learning has been successfully engaged to solve discrete-time optimal control for linear systems. Based on the Q-learning algorithm, \cite{Frank} studied a linear quadratic tracker for unknown discrete-time systems over an infinite horizon. Lee and Hu \cite{pd} converted the linear quadratic regulator problem into a non-convex optimization problem, which can be solved by applying the Q-learning method with primal-dual update procedures. For LQ output regulation problems, Rizvi and Lin \cite{ay} adopted a Q-learning approach that only requires input-output data rather than full-state data and performs both policy and value iteration. In \cite{jina}, an off-policy Q-learning algorithm was proposed for determining the optimal control policy for affine nonlinear control problems. In \cite{h}, state-data-driven and output-data-driven reinforcement Q-learning algorithms were devised to tackle H$_\infty$ tracking problems in discrete-time linear systems. In \cite{co}, Vamvoudakis and Hespanha proved that the graphical Nash equilibrium of a multi-agent system is guaranteed under a novel cooperative Q-learning algorithm where each Q-function is related to the dynamics of neighbors. Q-learning has also found wide applications in solving LQ stochastic systems with multiplicative noises. Inspired by Q-learning, Du et al. \cite{du} presented an online algorithm that overcomes the challenge of discrete-time LQ optimal control where the dynamics and criteria are both associated with Gaussian noises with inadequate statistical information. Zhang et al. \cite{juan} investigated the non-zero-sum difference game where the statistical data of multiplicative noise is unknown and designed a Q-learning algorithm for deriving the Nash strategy in the finite horizon. However, in the aforementioned literature, acquiring the solution to the associated Riccati equation is the premise of solving optimal policies. In other words, the complete system characteristics should be employed in designing the controllers, which distinctly increases computation complexity. It is notable that \cite{li} recently solved a CT LQ stochastic problem by an RL algorithm, which directly yields the optimal control using only local trajectory information. Nevertheless, CT models are inadaptable for digital signal processing, which goes against the tendency of scientific and technological developments. In this paper, we gave insight into the discrete-time stochastic LQ control problem and proposed an online RL method that merely takes advantage of state trajectories over a short time interval in each iteration and directly approaches the optimal control policy without modeling the inner structure of the system.

The main contributions of this paper can be summarized as follows:

(1) The policy evaluation procedure of the proposed algorithm does not involve solving the related Riccati equation; Bellman dynamic programming is employed instead, which significantly reduces the computation complexity.

(2) The proposed algorithm is able to solve discrete-time stochastic systems where both inputs and states are associated with multiplicative noises.

(3) Given a stabilizable initial controller, we prove that the control policies updated by the online algorithm are all stabilizable without requiring system identification procedures.

(4) During the policy evaluation procedure, the proposed algorithm only requires the trajectory information within an arbitrary time interval.

The rest of this paper is organized as follows. In Section 2, we set up the problem. In Section 3, the main results are presented. In Section 4, the online implementation of the proposed algorithm is discussed. Section 5 extends the algorithm to stochastic systems with partially unknown information. In Section 6, two numerical examples are provided. And Section 7 sums up this paper.

Notation: Define $vec(M)$ as a vectorization map from a matrix $M$ into an $nm$-dimensional column vector for compact representations, which stacks the columns of $M$ on top of one another. $A\otimes B$ denotes a Kronecker product of matrices $A$ and $B$, and $vec(ABC)=(C'\otimes A)vec(B)$. Denote an operator $vec^{+}(P)$, which
maps $P$ into an $N$-dimensional vector by stacking the columns
corresponding to the diagonal and lower triangular parts of $P$ on
top of one another where the off-diagonal terms of $P$ are doubled.

\section{Problem  Statement}

\normalsize
Let $x_t\in \mathbb{R}^{n}$ represent the state process with a deterministic initial value $\xi$, $u_t\in \mathbb{R}^{m}$ denote a control process and $w_t$ be independent Gaussian noises with zero mean value and covariance $\sigma$, we consider the following discrete-time stochastic linear system
\begin{eqnarray}
x_{t+1}=Ax_t+Bu_t+(Cx_t+Du_t)w_t,\label{2.1}
\end{eqnarray}
where the coefficient matrices $A, C\in \mathbb{R}^{n\times n}$, $B, D\in \mathbb{R}^{n\times m}$ are all constant matrices.  For simplicity, we denote system \rf{2.1} as $[A, C; B, D]$.

Firstly, we introduce the following definitions.
\begin{definition}
The following autonomous system $[A, C]$
\begin{eqnarray}
x_{t+1}=(A+w_tC)x_t \label{2.2}
\end{eqnarray}
 is called asymptotically mean-square stable if for any initial value $\xi$, there holds
 \begin{eqnarray*}
\lim_{t\rightarrow\infty}\mathbb{E}[x'_tx_t]=0.
\end{eqnarray*}
\end{definition}
\begin{definition}
System $[A, C; B, D]$ is called asymptotically mean-square stabilizable if there exists a constant matrix $K\in\mathbb{R}^{m\times n}$ such that the closed-loop system of \rf{2.1}, i.e.,
\begin{eqnarray}\label{kk}
x_{t+1}=[(A+BK)+(C+DK)w_t]x_{t}
\end{eqnarray}
 is  asymptotically mean-square stable. In this case, $K$ is called a stabilizer of system $[A, C; B, D]$ and feedback control $u(\cdot)=Kx(\cdot)$ is called stabilizing. The set of all stabilizers is denoted by $\mathcal{S}([A, C; B, D])$.
\end{definition}
It is well known that stability is the premise to ensure the normal operation of a system. For discussing, we make the following assumption.
\begin{assumption}\label{a1}
System $[A, C; B, D]$ is asymptotically mean-square stabilizable, i.e., $\mathcal{S}([A, C; B, D])\neq \emptyset$.
\end{assumption}
Under Assumption \ref{a1}, we define the corresponding set of admissible controls as
\begin{eqnarray*}
\mathcal{U}_{ad}=\{u(\cdot)\in L^{2}_{\mathcal{F}}(\mathbb{R}^{m}): u(\cdot) \ \ is \ \ stablilizing\}.
\end{eqnarray*}
\begin{lemma}\label{l1}
A matrix $K\in\mathbb{R}^{m\times n}$ is a stabilizer of system $[A, C; B, D]$ if and only if there exists a matrix $P>0$ such that
\begin{eqnarray*}
(A+BK)'P(A+BK)+\sigma^{2}(C+DK)'P(C+DK)-P<0.
\end{eqnarray*}
In this case, for any symmetric matrix $M (or \geq 0, >0)$, the Lyapunov equation
\begin{eqnarray*}
P=(A+BK)'P(A+BK)+\sigma^{2}(C+DK)'P(C+DK)+M
\end{eqnarray*}
admits a unique solution $P (or \geq 0, >0)$.
\end{lemma}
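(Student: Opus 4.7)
\medskip

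\noindent\textbf{Proof proposal for Lemma \ref{l1}.}

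The plan is to rewrite everything in terms of the single linear operator
\[
  \mathcal{T}_K(P) \;=\; (A+BK)'P(A+BK) + \sigma^{2}(C+DK)'P(C+DK)
\]
acting on symmetric matrices, because the closed-loop second-moment dynamics (\ref{kk}) are governed by $\mathcal{T}_K$. Using the zero-mean, variance-$\sigma^{2}$ property of $w_t$ and the tower property, one checks by induction that for any symmetric $M$ and any deterministic $\xi$, the trajectory of (\ref{kk}) starting from $\xi$ satisfies $\mathbb{E}[x_t'Mx_t]=\xi'\mathcal{T}_K^{\,t}(M)\xi$. Thus the stabilizability of $K$, the convergence of the infinite series $\sum_{t\ge 0}\mathcal{T}_K^{\,t}(M)$, and the spectral radius of $\mathcal{T}_K$ being strictly less than one are all equivalent; this is the bridge I will use throughout.

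For the sufficiency direction (existence of $P>0$ with $\mathcal{T}_K(P)-P<0$ implies $K$ is a stabilizer), I would pick $M:=P-\mathcal{T}_K(P)>0$ and set $V_t=\mathbb{E}[x_t'Px_t]$. Then the identity $V_{t+1}=V_t-\mathbb{E}[x_t'Mx_t]$ shows $V_t$ is nonincreasing and nonnegative, hence convergent, which forces $\mathbb{E}[x_t'Mx_t]\to 0$ and therefore $\mathbb{E}[x_t'x_t]\to 0$ since $M>0$. This establishes asymptotic mean-square stability.

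For the necessity direction, assume $K\in\mathcal{S}([A,C;B,D])$. Since the second-moment dynamics are a linear recursion in the $n(n+1)/2$-dimensional space of symmetric matrices, asymptotic stability is equivalent to the spectral radius of $\mathcal{T}_K$ being $<1$, hence to exponential decay $\|\mathcal{T}_K^{\,t}(M)\|\le c\,\rho^{t}\|M\|$ for some $\rho\in(0,1)$. Given any symmetric $M$, I would then define
\[
  P \;:=\; \sum_{t=0}^{\infty}\mathcal{T}_K^{\,t}(M),
\]
which converges absolutely. A term-by-term shift gives $P=\mathcal{T}_K(P)+M$, so $P$ solves the Lyapunov equation. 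Positivity properties follow from the series: each summand is $\ge 0$ when $M\ge 0$, so $P\ge 0$; and when $M>0$ the first term $M$ already makes $P>0$. Taking in particular $M=I>0$ produces the $P>0$ required in the first half of the lemma, since $\mathcal{T}_K(P)-P=-I<0$.

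Uniqueness of the Lyapunov solution is handled last: if $P_1,P_2$ both satisfy $P=\mathcal{T}_K(P)+M$, then $\Delta:=P_1-P_2=\mathcal{T}_K^{\,t}(\Delta)$ for every $t\ge 0$, and the exponential decay of $\mathcal{T}_K^{\,t}$ forces $\Delta=0$. The main obstacle I anticipate is the transition from asymptotic mean-square stability to the exponential bound on $\mathcal{T}_K^{\,t}$; the cleanest way I see is to linearize the problem at the level of second moments as above, so that stability of a finite-dimensional linear map automatically yields a spectral-radius bound, avoiding any pathwise exponential-stability argument.
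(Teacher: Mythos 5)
The paper itself offers no proof of Lemma \ref{l1}; it is invoked as a classical fact from the mean-square stability literature, so there is no ``paper route'' to compare against. Your argument is the standard one and is essentially correct and complete: the reduction to the positive linear operator $\mathcal{T}_K$ on symmetric matrices, the Lyapunov-function argument for sufficiency, the Neumann-series construction $P=\sum_{t\ge 0}\mathcal{T}_K^{\,t}(M)$ for existence with the sign of $M$ propagating termwise to $P$, and the fixed-point argument for uniqueness all go through.

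The one step you state faster than you prove is the pivot of the necessity direction: the claim that asymptotic mean-square stability is equivalent to $\rho(\mathcal{T}_K)<1$. The definition of stability only gives you $\xi'\mathcal{T}_K^{\,t}(I)\xi=\mathbb{E}[x_t'x_t]\to 0$ for every $\xi$, i.e.\ $\mathcal{T}_K^{\,t}(I)\to 0$; to conclude that $\mathcal{T}_K^{\,t}\to 0$ on the whole space of symmetric matrices (and hence that the spectral radius is below one) you must use that $\mathcal{T}_K$ is order-preserving, so that $-cI\le M\le cI$ forces $-c\,\mathcal{T}_K^{\,t}(I)\le \mathcal{T}_K^{\,t}(M)\le c\,\mathcal{T}_K^{\,t}(I)$. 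Equivalently, one can work with the adjoint recursion $X_{t+1}=(A+BK)X_t(A+BK)'+\sigma^2(C+DK)X_t(C+DK)'$ for the second-moment matrices $X_t=\mathbb{E}[x_tx_t']$, note that $\operatorname{tr}(X_t)\to 0$ with $X_t\ge 0$ gives $X_t\to 0$ for every rank-one initial condition $\xi\xi'$, and use that such matrices span the symmetric matrices. You correctly identify this transition as the main obstacle, but the sentence asserting the equivalence should be replaced by one of these two monotonicity arguments; with that insertion the proof is sound. A cosmetic point: the paper says $w_t$ has ``covariance $\sigma$'' while all its formulas use $\sigma^2$; your reading (variance $\sigma^2$) is the one consistent with the lemma as stated.
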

In this paper, we consider the following quadratic cost functional
\begin{eqnarray}
 J(s,x;u(\cdot))=\mathbb{E}_{s}\Big\{\sum^{\infty}_{t=s}[x'_tQx_t
 +2u'_tSx_t+u'_tRu_t]\Big\}.\label{2.3}
\end{eqnarray}

The weighting matrices satisfy the following standard assumption.
\begin{assumption}\label{a2}
$R\in\mathbb{R}^{m\times m}$ is a positive definite matrix, and $Q-S'R^{-1}S$ is a positive definite matrix.
\end{assumption}
The main purpose of this paper is to solve the following problem.
\begin{problem}\label{p1}
Given $s\geq 0$ and $x$, find a control $u^{\ast}(\cdot)\in\mathcal{U}_{ad}$ such that
\begin{eqnarray*}
 J(s,x;u^{\ast}(\cdot))=\inf_{u(\cdot)\in\mathcal{U}_{ad}}J(s,x;u(\cdot))\triangleq V(s,x),
\end{eqnarray*}
where $V(s,x)$ is called the value function.
\end{problem}
 \begin{remark}\label{r1}
 Under Assumption \ref{a2}, the well-posedness of Problem \ref{p1} can be guaranteed. \end{remark}

 \section{Reinforcement Learning for the Stochastic LQ Problem}
In this section, we present the main results of the convergence of the proposed RL algorithm.
\begin{lemma}\label{l2}
Suppose that $P>0$ is a solution to the following Lyapunov equation
\begin{align}
(A+BK)'P(A+BK)+\sigma^{2}(C+DK)'P(C+DK)+K'RK+K'S+S'K+Q-P=0, \label{2.4}
\end{align}
where
\begin{align}
K=-(R+B'PB+\sigma^{2}D'PD)^{-1}(B'PA+\sigma^{2}D'PC+S),\label{2.04}
\end{align}
then $u=Kx$ is an optimal control of Problem \ref{p1} and $V(s,\xi)=\xi'P\xi$. Moreover, we have the Bellman's DP recursive equation
\begin{align}
\xi'P\xi=\mathbb{E}_{s}\Big\{\sum^{s+l}_{t=s}x'_t(Q+K'S+S'K+K'RK)x_t
 +x'_{s+l+1}Px_{s+l+1}\Big\}\label{2.5}
\end{align}
for any constant l.
\end{lemma}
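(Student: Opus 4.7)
The plan is to carry out the standard completion-of-squares argument in the stochastic LQ setting, using the Lyapunov equation \rf{2.4} to cancel the quadratic-in-$x$ terms and isolate a nonnegative "control deviation" term.

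First, I would introduce the shorthand $\Lambda=R+B'PB+\sigma^{2}D'PD$ and $\Upsilon=B'PA+\sigma^{2}D'PC+S$, so that \rf{2.04} reads $K=-\Lambda^{-1}\Upsilon$. A direct substitution of this $K$ into the left-hand side of \rf{2.4} and regrouping the terms that contain $K$ shows that \rf{2.4} is equivalent to the algebraic Riccati identity
\begin{eqnarray*}
P=A'PA+\sigma^{2}C'PC+Q-\Upsilon'\Lambda^{-1}\Upsilon.
\end{eqnarray*}
This reformulation is what makes the completion of squares work cleanly. Note also that $\Lambda>0$ automatically, since $R>0$ by Assumption \ref{a2} and $P>0$.

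Next, for any admissible control $u(\cdot)\in\mathcal{U}_{ad}$ with state trajectory governed by \rf{2.1}, I would exploit $\mathbb{E}_{t}[w_{t}]=0$ and $\mathbb{E}_{t}[w_{t}^{2}]=\sigma^{2}$ to compute
\begin{eqnarray*}
\mathbb{E}_{t}[x'_{t+1}Px_{t+1}]=(Ax_{t}+Bu_{t})'P(Ax_{t}+Bu_{t})+\sigma^{2}(Cx_{t}+Du_{t})'P(Cx_{t}+Du_{t}).
\end{eqnarray*}
Adding the one-stage running cost $x'_{t}Qx_{t}+2u'_{t}Sx_{t}+u'_{t}Ru_{t}$, collecting terms, and using the Riccati identity above, I expect to arrive at
\begin{eqnarray*}
\mathbb{E}_{t}[x'_{t+1}Px_{t+1}]+x'_{t}Qx_{t}+2u'_{t}Sx_{t}+u'_{t}Ru_{t}=x'_{t}Px_{t}+(u_{t}-Kx_{t})'\Lambda(u_{t}-Kx_{t}).
\end{eqnarray*}
Summing from $t=s$ to $t=s+l$, telescoping the $\mathbb{E}_{t}[x'_{t+1}Px_{t+1}]-x'_{t}Px_{t}$ terms via the tower property, and applying $\mathbb{E}_{s}$ throughout will give
\begin{eqnarray*}
\xi'P\xi=\mathbb{E}_{s}\Big\{\sum_{t=s}^{s+l}\big[x'_{t}Qx_{t}+2u'_{t}Sx_{t}+u'_{t}Ru_{t}-(u_{t}-Kx_{t})'\Lambda(u_{t}-Kx_{t})\big]+x'_{s+l+1}Px_{s+l+1}\Big\}.
\end{eqnarray*}
Specializing to the candidate feedback $u_{t}=Kx_{t}$ kills the deviation term and, upon checking that $x'_{t}(Q+K'S+S'K+K'RK)x_{t}$ is exactly the running cost along this policy, yields the Bellman recursion \rf{2.5}.

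Finally, to conclude optimality, I would let $l\to\infty$. Since $K\in\mathcal{S}([A,C;B,D])$ (which I will verify by observing that along the candidate feedback the identity \rf{2.4} plays the role of Lyapunov's equation of Lemma \ref{l1}), the closed-loop state satisfies $\mathbb{E}_{s}[x'_{s+l+1}Px_{s+l+1}]\to 0$, giving $\xi'P\xi=J(s,\xi;Kx(\cdot))$. For an arbitrary $u\in\mathcal{U}_{ad}$, the deviation term is nonnegative since $\Lambda>0$, and the stabilizing property of $u(\cdot)$ again forces the boundary term to vanish, producing $\xi'P\xi\le J(s,\xi;u(\cdot))$. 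Combined, these give $V(s,\xi)=\xi'P\xi$ and optimality of $u^{\ast}=Kx$. The main technical care-point is the limit step: I need to justify that $\mathbb{E}_{s}[x'_{s+l+1}Px_{s+l+1}]\to 0$ for every admissible $u(\cdot)$, which reduces to the mean-square stability built into the definition of $\mathcal{U}_{ad}$ together with the boundedness of $P$.
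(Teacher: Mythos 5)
Your proposal is correct. Note that the paper states Lemma \ref{l2} without proof, so there is no in-paper argument to compare against line by line; your completion-of-squares derivation is the standard route and is fully consistent with the machinery the paper deploys elsewhere: the telescoping identity you use to pass from the one-step relation to the sum over $[s,s+l]$ is exactly the computation carried out in the paper's proof of Lemma \ref{l3} (equations \eqref{2.9}--\eqref{2.10}), specialized there to $u_t=K^{(i)}x_t$, and the algebraic equivalence of \eqref{2.4}--\eqref{2.04} with the SARE \eqref{2.13} that you establish first is implicit in Theorem \ref{t2}. Two small points are worth making explicit. First, when you invoke Lemma \ref{l1} to conclude that $K$ is a stabilizer, you need the running-cost matrix in \eqref{2.4} to be positive definite; this follows from Assumption \ref{a2} via the identity $K'RK+K'S+S'K+Q=Q-S'R^{-1}S+(RK+S)'R^{-1}(RK+S)>0$, which the paper itself uses at the start of the proof of Lemma \ref{l3}. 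Second, in the final inequality for an arbitrary admissible $u(\cdot)$ you should drop the nonnegative deviation term \emph{before} letting $l\to\infty$, so that the argument does not presuppose summability of $(u_t-Kx_t)'\Lambda(u_t-Kx_t)$; the boundary term then vanishes because $\mathbb{E}_s[x_{s+l+1}'Px_{s+l+1}]\le\lambda_{\max}(P)\,\mathbb{E}_s[x_{s+l+1}'x_{s+l+1}]\to0$ by the mean-square stability built into $\mathcal{U}_{ad}$, exactly as you indicate.
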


\begin{remark}\label{r2}
From Lemma \ref{l2}, when initial value x is given, the value of P can be inferred from \rf{2.5} by local trajectories of $x(\cdot)$ on $[s, s+l+1]$ instead of the entire state trajectories of $x(\cdot)$ on $[s,\infty)$.
\end{remark}
At each iteration $i (i=1,2,\ldots)$, the state trajectory is denoted by $x^{(i)}$ corresponding to the control law $K^{(i)}$. Now, we present Algorithm 1 as follows.

\begin{table}[htbp]

   \centering

   \begin{tabular}{l}

      \toprule

     \textbf{Algorithm 1} \emph{Policy Iteration for Problem \ref{p1}}\\

      \midrule

      1:\textbf{Initialization:} Select any stabilizer $K^{(0)}$ for system \rf{2.1}. \\

      2: Let $i=0$ and $\delta>0$.\\

      3: \textbf{do} \{ \\

      4: Obtain local state trajectories $x^{(i)}$ by running system \rf{2.1} with
$K^{(i)}$ on $[s, s+l+1]$. \\

5: \textbf{Policy Evaluation} (Reinforcement): Solve $P^{(i+1)}$ from the identity\\
\(\xi'P^{(i+1)}\xi-\mathbb{E}_{s}[(x^{(i)}_{s+l+1})'P^{(i+1)}x^{(i)}_{s+l+1}]\)\\
\(=\mathbb{E}_{s}\Big\{\sum^{s+l}_{t=s}(x^{(i)}_t)'[Q+(K^{(i)})'S+S'K^{(i)}
+(K^{(i)})'RK^{(i)}]x^{(i)}_t\Big\}\) \quad ($\ast$)\\
%
%
   6: \textbf{Policy Improvement} (Update): Update $K^{(i+1)}$ by the formula\\
\(K^{(i+1)}=-(R+B'P^{(i+1)}B+\sigma^{2}D'P^{(i+1)}D)^{-1}
(B'P^{(i+1)}A+\sigma^{2}D'P^{(i+1)}C+S)\)\quad ($\ast\ast$)
\\

7: $i\leftarrow i+1$\\

8:\}\ \textbf{until} $\|P^{(i+1)}-P^{(i)}\|<\delta$.\\

      \bottomrule

   \end{tabular}

\end{table}
\begin{lemma}\label{l3}
Suppose that Assumption \ref{a2} holds and system \rf{2.1} is stabilizable with $K^{(i)}$. Then solving Policy Evaluation ($\ast$) in Algorithm 1 is equivalent to solving Lyapunov Recursion
\begin{eqnarray}
&&(A+BK^{(i)})'P^{(i+1)}(A+BK^{(i)})
+\sigma^{2}(C+DK^{(i)})'P^{(i+1)}(C+DK^{(i)})\no\\
&&+(K^{(i)})'RK^{(i)}+(K^{(i)})'S+S'K^{(i)}+Q-P^{(i+1)}=0. \label{2.8}
\end{eqnarray}
\end{lemma}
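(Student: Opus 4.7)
The plan is to establish the equivalence in both directions by exploiting the one-step conditional variance computation under the closed-loop dynamics. Let me abbreviate $A_K := A+BK^{(i)}$, $C_K := C+DK^{(i)}$, and $M := Q + (K^{(i)})'S + S'K^{(i)} + (K^{(i)})'RK^{(i)}$, so that the closed-loop trajectory $x^{(i)}$ obeys $x^{(i)}_{t+1} = A_K x^{(i)}_t + C_K x^{(i)}_t w_t$. The crucial computational fact I will use repeatedly is that, since $w_t$ has zero mean, variance $\sigma^2$, and is independent of $\mathcal{F}_t$,
\begin{equation*}
\mathbb{E}\bigl[(x^{(i)}_{t+1})' P\, x^{(i)}_{t+1} \,\big|\, \mathcal{F}_t\bigr] = (x^{(i)}_t)'\bigl[A_K' P A_K + \sigma^{2} C_K' P C_K\bigr] x^{(i)}_t
\end{equation*}
for any symmetric $P$. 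This reduces the stochastic identity in Algorithm~1 to deterministic matrix manipulations.

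For the implication $(\ref{2.8})\Rightarrow(\ast)$, I would suppose $P^{(i+1)}$ solves the Lyapunov recursion, rearrange it as $A_K' P^{(i+1)} A_K + \sigma^2 C_K' P^{(i+1)} C_K - P^{(i+1)} = -M$, and substitute into the one-step identity above to obtain
\begin{equation*}
\mathbb{E}\bigl[(x^{(i)}_t)' P^{(i+1)} x^{(i)}_t - (x^{(i)}_{t+1})' P^{(i+1)} x^{(i)}_{t+1}\,\big|\,\mathcal{F}_s\bigr] = \mathbb{E}\bigl[(x^{(i)}_t)' M x^{(i)}_t \,\big|\, \mathcal{F}_s\bigr].
\end{equation*}
Summing over $t=s,\dots,s+l$ gives a telescoping cancellation on the left, yielding exactly $(\ast)$ with initial condition $x^{(i)}_s=\xi$.

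For the converse $(\ast)\Rightarrow(\ref{2.8})$, I would specialize $(\ast)$ to the shortest window $l=0$, which reads
\begin{equation*}
\xi' P^{(i+1)} \xi - \xi'\bigl[A_K' P^{(i+1)} A_K + \sigma^{2} C_K' P^{(i+1)} C_K\bigr]\xi = \xi' M \xi,
\end{equation*}
after applying the one-step expectation formula with $x^{(i)}_s=\xi$. Because $(\ast)$ is required to hold for every initial value $\xi \in \mathbb{R}^n$ (the policy evaluation step must determine $P^{(i+1)}$ uniquely from trajectory data), and because both sides are quadratic forms in $\xi$ whose matrices are symmetric, a standard polarization argument forces the matrix identity $P^{(i+1)} - A_K' P^{(i+1)} A_K - \sigma^2 C_K' P^{(i+1)} C_K = M$, which is exactly (\ref{2.8}). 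Finally, to close the equivalence cleanly I would invoke Lemma~\ref{l1}: since $K^{(i)}$ is a stabilizer, the Lyapunov recursion (\ref{2.8}) admits a unique symmetric solution $P^{(i+1)}$, so the policy evaluation step is well-posed and the two formulations determine the same matrix.

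The main obstacle is not conceptual but presentational: one must be careful to track the conditional expectation $\mathbb{E}_s[\cdot]$ under which the telescoping holds, and to justify that the identity $(\ast)$, interpreted as a scalar equation parameterised by $\xi$, carries enough information to recover the full matrix equation (\ref{2.8}); the $l=0$ specialization combined with symmetry is the cleanest route and avoids any appeal to persistence of excitation at this stage.
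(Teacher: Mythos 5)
Your proof is correct and follows essentially the same route as the paper: a telescoping argument built on the one-step conditional expectation $\mathbb{E}[(x_{t+1})'Px_{t+1}\,|\,\mathcal{F}_t]=(x_t)'(A_K'PA_K+\sigma^2C_K'PC_K)x_t$ for the direction $(\ref{2.8})\Rightarrow(\ast)$, and a reduction to a single time step plus arbitrariness of the state for the converse. The only difference is presentational but in your favor: you specialize to $l=0$ and polarize over $\xi$, whereas the paper divides by $l$ and ``lets $l\rightarrow 0$,'' a continuous-time artifact that has no meaning for a discrete sum; your version is the correct discrete-time rendering of the same idea (both arguments share the same implicit requirement that $(\ast)$ hold for enough initial states to pin down the full matrix).
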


\begin{proof}
Suppose that $K^{(i)}$ is a stabilizer for system \rf{2.1}. By Assumption \ref{a2}, we have
\begin{eqnarray*}
(K^{(i)})'RK^{(i)}+(K^{(i)})'S+S'K^{(i)}+Q=Q-S'R^{-1}S+(RK^{(i)}+S)'R^{-1}(RK^{(i)}+S)>0.
\end{eqnarray*}
By Lemma \ref{l1}, Lyapunov equation \rf{2.8} admits a unique solution $P^{(i+1)}>0$.

Let $\mathcal{L}(t)=(x^{(i)}_t)'P^{(i+1)}x^{(i)}_t$. Thus, we have
\begin{eqnarray}
\mathcal{L}(t)-\mathcal{L}(t+1)
&=&(x^{(i)}_t)'[P^{(i+1)}-
(A_{t}+B_{t}K^{(i)})'P^{(i+1)}(A_{t}+B_{t}K^{(i)})]x^{(i)}_t\label{2.9}
\end{eqnarray}
with $A_{t}=A+w_tC, B_{t}=B+w_tD$.

Now, by taking a summation from $t=s$ to $s+l$ and conditional expectation $\mathbb{E}_{s}$ on both sides of \rf{2.9}, we have
\begin{eqnarray}
&&\xi'P^{(i+1)}\xi-\mathbb{E}_{s}[(x^{(i)}_{s+l+1})'P^{(i+1)}x^{(i)}_{s+l+1}]\no\\
&=&\mathbb{E}_{s}\Big\{\sum^{s+l}_{t=s}(x^{(i)}_t)'[P^{(i+1)}-(A+BK^{(i)})'P^{(i+1)}
(A+BK^{(i)})\no\\
&&-\sigma^{2}(C+DK^{(i)})'P^{(i+1)}(C+DK^{(i)})]x^{(i)}_t\Big\}\label{2.10}\\
&=&\mathbb{E}_{s}\Big\{\sum^{s+l}_{t=s}(x^{(i)}_t)'[(K^{(i)})'RK^{(i)}
+(K^{(i)})'S+S'K^{(i)}+Q]x^{(i)}_t\Big\}\no,
\end{eqnarray}
which confirms Policy Evaluation ($\ast$).

On the other hand, if $P^{(i+1)}$ is the solution to ($\ast$), from \rf{2.10}, for any $n>s$, we get
\begin{eqnarray}
0&=&\mathbb{E}_{n}\Big\{\sum^{n+l}_{t=n}(x^{(i)}_t)'[P^{(i+1)}-(A+BK^{(i)})'P^{(i+1)}
(A+BK^{(i)})\no\\
&&-\sigma^{2}(C+DK^{(i)})'P^{(i+1)}(C+DK^{(i)})-(K^{(i)})'RK^{(i)}
-(K^{(i)})'S-S'K^{(i)}-Q]x^{(i)}_t\Big\}.\label{2.100}
\end{eqnarray}
Divide $l$ on both sides of \rf{2.100}, and let $l\rightarrow 0$, we have
\begin{eqnarray}
0&=&(x^{(i)}_n)'[P^{(i+1)}-(A+BK^{(i)})'P^{(i+1)}
(A+BK^{(i)})\no\\
&&-\sigma^{2}(C+DK^{(i)})'P^{(i+1)}(C+DK^{(i)})-(K^{(i)})'RK^{(i)}
-(K^{(i)})'S-S'K^{(i)}-Q]x^{(i)}_n.\label{2.11}
\end{eqnarray}
It implies that \rf{2.8} holds with the arbitrariness of $x^{(i)}(n)$. Moreover, from Lemma \ref{l2}, \rf{2.8} admits a positive definite solution.
\end{proof}

\begin{remark}\label{r3}
Although solving Policy Evaluation ($\ast$) in Algorithm 1 is equivalent to solving Lyapunov Recursion \rf{2.8}, it is necessary to know $A$ for solving \rf{2.8}, i.e., the complete information of the system is required. By comparison, Algorithm 1 can be run with partial information.
\end{remark}

In fact, the precondition for the normal operation of Algorithm 1 is that
$K^{(i)}$ should be stepwise stable. Next, we consider the stepwise stable property of $K^{(i)}$.

\begin{theorem}\label{t1}
Suppose that Assumptions \ref{a1} and \ref{a2} hold, and initial value $K^{(0)}$ is a stabilizer for system \rf{2.1}. Then all the policies $\{K^{(i)}\}^{\infty}_{i=1}$ updated by Policy Improvement ($\ast\ast$) are stabilizers. Moreover, ($\ast$) admits a unique solution $P^{(i+1)}>0$ at each step.
\end{theorem}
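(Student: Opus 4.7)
The plan is an induction on $i$. The base case $i=0$ is immediate from the assumption that $K^{(0)}$ is a stabilizer. For the inductive step, I would assume $K^{(i)} \in \mathcal{S}([A,C;B,D])$ and prove two things in order: (a) that Policy Evaluation $(\ast)$ has a unique positive-definite solution $P^{(i+1)}$, and (b) that the updated gain $K^{(i+1)}$ produced by $(\ast\ast)$ is again a stabilizer.

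For (a), since $K^{(i)}$ is stabilizing, Lemma \ref{l3} tells me $(\ast)$ is equivalent to the Lyapunov recursion \rf{2.8}. Using Assumption \ref{a2}, the constant term
\[
M^{(i)} := Q + S'K^{(i)} + (K^{(i)})'S + (K^{(i)})'RK^{(i)} = (Q - S'R^{-1}S) + (RK^{(i)}+S)'R^{-1}(RK^{(i)}+S)
\]
is positive definite, so Lemma \ref{l1} delivers a unique $P^{(i+1)}>0$ solving \rf{2.8}.

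For (b), the goal is to verify the strict Lyapunov inequality in Lemma \ref{l1} for the closed-loop data $(A+BK^{(i+1)},\, C+DK^{(i+1)})$ and the matrix $P^{(i+1)}$ just constructed. Writing $\Upsilon := R+B'P^{(i+1)}B+\sigma^{2}D'P^{(i+1)}D>0$ and $F := B'P^{(i+1)}A+\sigma^{2}D'P^{(i+1)}C+S$, the update rule is $K^{(i+1)}=-\Upsilon^{-1}F$. Expanding the Lyapunov expression for the new gain and performing the standard "completing the square" in $K$, I expect the identity
\[
(A+BK^{(i+1)})'P^{(i+1)}(A+BK^{(i+1)}) + \sigma^{2}(C+DK^{(i+1)})'P^{(i+1)}(C+DK^{(i+1)}) - P^{(i+1)} = -M^{(i+1)} - (K^{(i)}-K^{(i+1)})'\Upsilon(K^{(i)}-K^{(i+1)}),
\]
where $M^{(i+1)}$ is the analogue of $M^{(i)}$ with $K^{(i)}$ replaced by $K^{(i+1)}$. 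Both terms on the right are negative semi-definite, and $M^{(i+1)}>0$ by Assumption \ref{a2}, so the right-hand side is strictly negative definite. Lemma \ref{l1} then certifies $K^{(i+1)} \in \mathcal{S}([A,C;B,D])$, closing the induction.

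The main obstacle is the algebraic identity in the previous display. The derivation uses \rf{2.8} to replace $A'P^{(i+1)}A + \sigma^{2}C'P^{(i+1)}C$ by $P^{(i+1)} - M^{(i)} - F'K^{(i)} - (K^{(i)})'F - (K^{(i)})'\Upsilon K^{(i)} + S'R^{-1}S$-type terms, then regroup the cross terms in $K^{(i)}$ and $K^{(i+1)}$ using $\Upsilon K^{(i+1)} = -F$. This is routine but requires careful bookkeeping of the cross terms involving $S$; everything else in the argument is a direct appeal to Lemmas \ref{l1}--\ref{l3} and Assumption \ref{a2}.
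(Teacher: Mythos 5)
Your proposal is correct and follows essentially the same route as the paper: induction on $i$, existence and uniqueness of $P^{(i+1)}>0$ via Lemma \ref{l3} and Lemma \ref{l1} using the positive definiteness of $Q+S'K^{(i)}+(K^{(i)})'S+(K^{(i)})'RK^{(i)}=(Q-S'R^{-1}S)+(RK^{(i)}+S)'R^{-1}(RK^{(i)}+S)$, and then the completing-the-square identity for the closed-loop Lyapunov expression of the updated gain. The identity you ``expect'' is exactly the chain of equalities culminating in \rf{2.11} of the paper's proof (with your indices shifted by one), so the only remaining work is the bookkeeping you already describe.
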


\begin{proof}
From Lemma \ref{l3}, the conclusion holds for $i=0$. Now we assume that for $i\geq 1$,
$K^{(i-1)}$ is a stabilizer and  $P^{(i)}>0$ is the unique solution to ($\ast$). Accordingly, we illustrate that $K^{(i)}=-(R+B'P^{(i)}B+\sigma^{2}D'PP^{(i)}D)^{-1}
(B'P^{(i)}A+\sigma^{2}D'P^{(i)}C+S)$ is a stabilizer and $P^{(i+1)}>0$ is the unique solution to ($\ast$). Based on the previous analysis, we have
\begin{eqnarray}
&&(A+BK^{(i)})'P^{(i)}(A+BK^{(i)})
+\sigma^{2}(C+DK^{(i)})'P^{(i)}(C+DK^{(i)})-P^{(i)}\no\\
&=&(A+BK^{(i-1)})'P^{(i)}(A+BK^{(i-1)})
+\sigma^{2}(C+DK^{(i-1)})'P^{(i)}(C+DK^{(i-1)})-P^{(i)}\no\\
&&-\Big[(A+BK^{(i-1)})'P^{(i)}(A+BK^{(i-1)})
+\sigma^{2}(C+DK^{(i-1)})'P^{(i)}(C+DK^{(i-1)})\no\\
&&-(A+BK^{(i)})'P^{(i)}(A+BK^{(i)})
-\sigma^{2}(C+DK^{(i)})'P^{(i)}(C+DK^{(i)})\Big]\no\\
&=&-(K^{(i-1)})'RK^{(i-1)}-(K^{(i-1)})'S-S'K^{(i-1)}-Q\no\\
&&-\Big[(A'P^{(i)}B+\sigma^{2}C'P^{(i)}D)(K^{(i-1)}-K^{(i)})
+(K^{(i-1)}-K^{(i)})'(A'P^{(i)}B+\sigma^{2}C'P^{(i)}D)'\no\\
&&+(K^{(i-1)})'(B'P^{(i)}B+\sigma^{2}D'P^{(i)}D)K^{(i-1)}
-(K^{(i)})'(B'P^{(i)}B+\sigma^{2}D'P^{(i)}D)K^{(i)}\Big]\no\\
&=&-(K^{(i-1)})'RK^{(i-1)}-(K^{(i-1)})'S-S'K^{(i-1)}-Q\no\\
&&+[(R+B'P^{(i)}B+\sigma^{2}D'P^{(i)}D)K^{(i)}+S]'(K^{(i-1)}-K^{(i)})\no\\
&&+(K^{(i-1)}-K^{(i)})'[(R+B'P^{(i)}B+\sigma^{2}D'P^{(i)}D)K^{(i)}+S]\no\\
&&-(K^{(i-1)})'(B'P^{(i)}B+\sigma^{2}D'P^{(i)}D)K^{(i-1)}
+(K^{(i)})'(B'P^{(i)}B+\sigma^{2}D'P^{(i)}D)K^{(i)}\no\\
&=&-(K^{(i-1)})'RK^{(i-1)}-(K^{(i-1)})'S-S'K^{(i-1)}-Q\no\\
&&+(RK^{(i)}+S)'(K^{(i-1)}-K^{(i)})+(K^{(i-1)}-K^{(i)})'(RK^{(i)}+S)\no\\
&&-(K^{(i-1)}-K^{(i)})'(B'P^{(i)}B+\sigma^{2}D'P^{(i)}D)(K^{(i-1)}-K^{(i)})\no\\
&=&-(Q-S'R^{-1}S)-(K^{(i)}+R^{-1}S)'R(K^{(i)}+R^{-1}S)\no\\
&&-(K^{(i-1)}-K^{(i)})'(R+B'P^{(i)}B+\sigma^{2}D'P^{(i)}D)(K^{(i-1)}-K^{(i)})<0,
\label{2.11}
\end{eqnarray}
where the second equality holds for \rf{2.8} in Lemma \ref{l3}, the third equality holds for ($\ast\ast$) and the last inequality results from Assumption \ref{a2}. Hence, from Lemma \ref{l1}, $K^{(i)}$ is a stabilizer. Moreover, by Lemmas \ref{l1} and \ref{l3}, $P^{(i)}>0$ is the unique solution to ($\ast$).

\end{proof}

Next, we are ready to prove the convergence of Algorithm 1.

\begin{theorem}\label{t2}
The sequence of $\{P^{(i)}\}^{\infty}_{i=1}$ updated by Algorithm 1 converges to $P$, which is the solution to the following SARE
\begin{eqnarray}
P&=&Q+A'PA+\sigma^{2}C'PC-(B'PA+\sigma^{2}D'PC+S)'\no\\
&&\times
(R+B'PB+\sigma^{2}D'PD)^{-1}(B'PA+\sigma^{2}D'PC+S).\label{2.13}
\end{eqnarray}
In this case, the optimal control of Problem \ref{p1} is
\begin{eqnarray}
u^{\ast}=Kx^{\ast},\label{2.14}
\end{eqnarray}
where $K$ is expressed as in \rf{2.04}. Moreover, $K\in \mathcal{S}([A, C; B, D])$.
\end{theorem}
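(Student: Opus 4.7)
The proof plan follows the classical Hewer--Kleinman policy iteration template, adapted to the stochastic discrete-time setting: I would establish that $\{P^{(i)}\}_{i\geq 1}$ is a Loewner-monotone non-increasing sequence bounded below by a positive-definite matrix, deduce convergence to some $P^\infty>0$, and then identify $P^\infty$ as a SARE solution by passing to the limit in \rf{2.8}.

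For the monotonicity step the heavy algebra has already been done in the proof of Theorem \ref{t1}. Reading the chain of equalities there in reverse gives $P^{(i)} = \mathcal{L}_{K^{(i)}}(P^{(i)}) + Q + (K^{(i)})'RK^{(i)} + (K^{(i)})'S + S'K^{(i)} + \Gamma^{(i)}$, where $\mathcal{L}_K(X) := (A+BK)'X(A+BK) + \sigma^2(C+DK)'X(C+DK)$ and $\Gamma^{(i)} := (K^{(i-1)}-K^{(i)})'(R+B'P^{(i)}B+\sigma^2 D'P^{(i)}D)(K^{(i-1)}-K^{(i)}) \geq 0$. Subtracting the Lyapunov equation \rf{2.8} for $P^{(i+1)}$ yields $P^{(i)}-P^{(i+1)} = \mathcal{L}_{K^{(i)}}(P^{(i)}-P^{(i+1)}) + \Gamma^{(i)}$. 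Since Theorem \ref{t1} guarantees $K^{(i)}$ is a stabilizer, Lemma \ref{l1} applied to this inhomogeneous Lyapunov equation forces $P^{(i)} \geq P^{(i+1)}$.

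For the lower bound, I would fix a positive-definite reference $P^\star$ solving the SARE \rf{2.13}, whose existence is furnished by Lemma \ref{l2} together with the well-posedness recorded in Remark \ref{r1}; let $K^\star$ be the gain \rf{2.04} built from $P^\star$. A direct completion-of-squares computation yields, for every $K$, the identity $\mathcal{L}_K(P^\star) + Q + K'RK + K'S + S'K - P^\star = (K-K^\star)'(R+B'P^\star B+\sigma^2 D'P^\star D)(K-K^\star) \geq 0$. Specializing to $K=K^{(i)}$ and subtracting \rf{2.8} gives $P^{(i+1)}-P^\star \geq \mathcal{L}_{K^{(i)}}(P^{(i+1)}-P^\star)$, so Lemma \ref{l1} applied again forces $P^{(i+1)} \geq P^\star > 0$ for all $i$.

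Combining both steps, $\{P^{(i)}\}$ converges entrywise in the Loewner order to some $P^\infty \geq P^\star > 0$. Continuity of matrix inversion, legitimate because $R+B'P^{(i)}B+\sigma^2 D'P^{(i)}D \geq R > 0$ uniformly, shows $K^{(i)} \to K^\infty$ given by \rf{2.04} at $P^\infty$. Passing to the limit in \rf{2.8} and substituting the explicit form of $K^\infty$ reduces the equation to \rf{2.13}, so $P^\infty$ is a positive-definite solution of the SARE; by uniqueness of that solution (again via Lemma \ref{l2}), $P^\infty = P$. Formula \rf{2.14} and the stabilizability $K \in \mathcal{S}([A,C;B,D])$ then follow immediately from Lemma \ref{l2}. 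The main obstacle is the lower-bound step, which requires producing a positive-definite reference before the iteration has converged; the cleanest route is through Lemma \ref{l2} and Remark \ref{r1}. A secondary technical point is ensuring $P^\infty$ is strictly positive definite so that Lemma \ref{l1} can be invoked at the limit, but the bound $P^\infty \geq P^\star$ delivers this for free.
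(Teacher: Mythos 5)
Your monotonicity step and your limit-passage step are essentially the paper's own argument: the paper likewise derives a Lyapunov equation for $\hat P^{(i+1)}=P^{(i)}-P^{(i+1)}$ with nonnegative forcing term $(\hat K^{(i)})'(R+B'P^{(i)}B+\sigma^{2}D'P^{(i)}D)\hat K^{(i)}$ (its identity \rf{2.15}), invokes Lemma \ref{l1} to conclude $P^{(i)}\geq P^{(i+1)}$, and then lets $i\to\infty$ in \rf{2.8}, substituting ($\ast\ast$) to recover the SARE \rf{2.13}. Where you diverge is the lower bound, and that is where your plan has a genuine gap. You bound the iterates below by a reference solution $P^{\star}$ of \rf{2.13}, claiming its existence is ``furnished by Lemma \ref{l2} together with Remark \ref{r1}.'' It is not: Lemma \ref{l2} is a verification lemma --- it begins ``suppose that $P>0$ is a solution'' --- and Remark \ref{r1} only asserts finiteness of the value function; neither establishes existence (let alone the uniqueness you later invoke) of a positive-definite solution to \rf{2.13}. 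Since producing such a solution is exactly what the policy-iteration scheme is meant to accomplish, using it as an a priori lower bound is circular within this paper's logical structure. The repair is the paper's route: the trivial bound $P^{(i)}>0$ already makes the monotone non-increasing sequence convergent to some $P\geq 0$; strict positivity of the limit and the stabilizability of $K$ are then recovered \emph{after} passing to the limit, from the limiting Lyapunov equation \rf{2.16} together with the positivity of $K'RK+K'S+S'K+Q$ under Assumption \ref{a2} and Lemma \ref{l1}. With the lower-bound step replaced in this way, the rest of your proposal coincides with the paper's proof.
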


\begin{proof}
From Lemma \ref{l3}, we now prove that $\{P^{(i)}\}^{\infty}_{i=1}$
in \rf{2.8} combining with ($\ast\ast$) converges to $P$, which is the solution to SARE \rf{2.13}.

Step 1: The convergence of $\{P^{(i)}\}^{\infty}_{i=1}$.

Denote $\hat{P}^{(i+1)}=P^{(i)}-P^{(i+1)}$, and $\hat{K}^{(i)}=K^{(i-1)}-K^{(i)}$ for $i=1,2,\ldots$, then
\begin{eqnarray}
0&=&(A+BK^{(i-1)})'P^{(i)}(A+BK^{(i-1)})
+\sigma^{2}(C+DK^{(i-1)})'P^{(i)}(C+DK^{(i-1)})\no\\
&&+(K^{(i-1)})'RK^{(i-1)}+(K^{(i-1)})'S+S'K^{(i-1)}+Q-P^{(i)}\no\\
&&-(A+BK^{(i)})'P^{(i+1)}(A+BK^{(i)})
-\sigma^{2}(C+DK^{(i)})'P^{(i+1)}(C+DK^{(i)})\no\\
&&-(K^{(i)})'RK^{(i)}-(K^{(i)})'S-S'K^{(i)}-Q+P^{(i+1)}\no\\
&=&(A+BK^{(i)})'\hat{P}^{(i+1)}(A+BK^{(i)})
+\sigma^{2}(C+DK^{(i)})'\hat{P}^{(i+1)}(C+DK^{(i)})\no\\
&&+(A+BK^{(i-1)})'P^{(i)}(A+BK^{(i-1)})-(A+BK^{(i)})'P^{(i)}(A+BK^{(i)})\no\\
&&+\sigma^{2}(C+DK^{(i-1)})'P^{(i)}(C+DK^{(i-1)})
-\sigma^{2}(C+DK^{(i)})'P^{(i)}(C+DK^{(i)})\no\\
&&-(K^{(i)})'RK^{(i)}+(K^{(i-1)})'RK^{(i-1)}+(\hat{K}^{(i)})'S+S'\hat{K}^{(i)}-\hat{P}^{(i+1)}\no\\
&=&(A+BK^{(i)})'\hat{P}^{(i+1)}(A+BK^{(i)})
+\sigma^{2}(C+DK^{(i)})'\hat{P}^{(i+1)}(C+DK^{(i)})\no\\
&&+(A'P^{(i)}B+\sigma^{2}C'P^{(i)}D)\hat{K}^{(i)}+(\hat{K}^{(i)})'
(A'P^{(i)}B+\sigma^{2}C'P^{(i)}D)'\no\\
&&+(K^{(i-1)})'(R+B'P^{(i)}B+\sigma^{2}D'P^{(i)}D)K^{(i-1)}
-(K^{(i)})'(R+B'P^{(i)}B+\sigma^{2}D'P^{(i)}D)K^{(i)}\no\\
&&+(\hat{K}^{(i)})'S+S'\hat{K}^{(i)}-\hat{P}^{(i+1)}\no\\
&=&(A+BK^{(i)})'\hat{P}^{(i+1)}(A+BK^{(i)})
+\sigma^{2}(C+DK^{(i)})'\hat{P}^{(i+1)}(C+DK^{(i)})
-\hat{P}^{(i+1)}\no\\
&&-[(R+B'P^{(i)}B+\sigma^{2}D'P^{(i)}D)K^{(i)}]'\hat{K}^{(i)}
-(\hat{K}^{(i)})'
[(R+B'P^{(i)}B+\sigma^{2}D'P^{(i)}D)K^{(i)}]\no\\
&&+(K^{(i-1)})'(R+B'P^{(i)}B+\sigma^{2}D'P^{(i)}D)K^{(i-1)}
-(K^{(i)})'(R+B'P^{(i)}B+\sigma^{2}D'P^{(i)}D)K^{(i)}\no\\
&=&(A+BK^{(i)})'\hat{P}^{(i+1)}(A+BK^{(i)})
+\sigma^{2}(C+DK^{(i)})'\hat{P}^{(i+1)}(C+DK^{(i)})-\hat{P}^{(i+1)}\no\\
&&+(\hat{K}^{(i)})'
(R+B'P^{(i)}B+\sigma^{2}D'P^{(i)}D)\hat{K}^{(i)}.  \label{2.15}
\end{eqnarray}
From Lemma \ref{l1}, \rf{2.15} admits a unique solution $\hat{P}^{(i+1)}\geq0$ due to $(\hat{K}^{(i)})'
(R+B'P^{(i)}B+\sigma^{2}D'P^{(i)}D)\hat{K}^{(i)}\geq0$. Hence, the sequence of $\{P^{(i)}\}^{\infty}_{i=1}$ is monotonically decreasing. The fact that $P^{(i)}>0$, $\{P^{(i)}\}^{\infty}_{i=1}$ is convergent reveals that the limitation can be written as $P$.

Step 2: We show that $P$ is the solution to SARE \rf{2.13}.

From the convergence of $\{P^{(i)}\}^{\infty}_{i=1}$ and ($\ast\ast$), we obtain the convergence of $\{K^{(i)}\}^{\infty}_{i=1}$, i.e., $\lim_{i\rightarrow\infty}K^{(i)}=K$. Hence, from Lemma \ref{l3}, we have
\begin{eqnarray}
&&(A+BK)'P(A+BK)
+\sigma^{2}(C+DK)'P(C+DK)\no\\
&&+K'RK+K'S+S'K+Q-P=0. \label{2.16}
\end{eqnarray}
From Assumption \ref{a2}, we get $K'RK+K'S+S'K+Q=Q-S'R^{-1}S+(K+R^{-1}S)'R(K+R^{-1}S)>0$. Hence, $K$ is a stabilizer of system \rf{2.1} and $P>0$ holds due to Lemma \ref{l1}.

\end{proof}

\section{Online Implementation}

In this section, we illustrate the implementation of Algorithm 1 in detail. Since there are $N:=\frac{n(n+1)}{2}$ independent parameters in matrix $P^{(i+1)}$, we need to observe state along trajectories for at least N intervals $[s_{j},s_{j}+l_{j}]$ with $j=1,2,\ldots,N$ on $[0,\infty)$ to reinforce the target function
\begin{align}
\triangle J^{(i)}(s_{j}, s_{j}+l_{j}, x^{(i)}, K^{(i)}):=\mathbb{E}_{s_{j}}\Big\{\sum^{s_{j}+l_j}_{t=s_{j}}(x^{(i)}_t)'[(K^{(i)})'RK^{(i)}
+(K^{(i)})'S+S'K^{(i)}+Q]x^{(i)}_t\Big\}. \label{3.1}
\end{align}
Denote
\begin{eqnarray*}
\hat{x}_{j}^{(i)}=(x_{s_{j}}^{(i)})'\otimes (x_{s_{j}}^{(i)})'-\mathbb{E}_{s_{j}}[(x^{(i)}_{s_{j}+l_{j}})'\otimes (x^{(i)}_{s_{j}+l_{j}})'],
\end{eqnarray*}
where $x_{s_{j}}^{(i)}$ represents the initial state at the $i$-th iteration, the set of equations \rf{3.1} can be transformed into
\begin{eqnarray}
\begin{bmatrix}\hat{x}_{1}^{(i)}\\\hat{x}_{2}^{(i)}\\\vdots \\ \hat{x}_{N}^{(i)}\end{bmatrix}vec(P^{i+1})=\begin{bmatrix}\triangle J^{(i)}(s_{1}, s_{1}+l_{1})\\
\triangle J^{(i)}(s_{2}, s_{2}+l_{2})\\
\vdots\\
\triangle J^{(i)}(s_{N}, s_{N}+l_{N})\end{bmatrix}.\label{3.2}
\end{eqnarray}
Denote
\begin{eqnarray*}
X^{(i)}=\begin{bmatrix}\hat{x}_{1}^{(i)}\\\hat{x}_{2}^{(i)}\\\vdots \\ \hat{x}_{N}^{(i)}\end{bmatrix},\ \
\mathcal{J}^{(i)}=\begin{bmatrix}\triangle J^{(i)}(s_{1}, s_{1}+l_{1})\\
\triangle J^{(i)}(s_{2}, s_{2}+l_{2})\\
\vdots\\
\triangle J^{(i)}(s_{N}, s_{N}+l_{N})\end{bmatrix},
\end{eqnarray*}
it follows from \rf{3.2} that
\begin{eqnarray}
X^{(i)}vec(P^{i+1})=\mathcal{J}^{(i)}. \label{3.3}
\end{eqnarray}
Furthermore, by using the sampled data at terminal time $s_{j}+l_{j}$, the expectation value of the right hand side of \rf{3.1} can be derived by calculating the mean value based on $L$ sample paths $x^{(i)}_{s_{j}+l_{j}}(k), k=1,2,\ldots, L$, that is,
 \begin{eqnarray*}
 \mathbb{E}_{s_{j}}[(x^{(i)}_{s_{j}+l_{j}})'\otimes (x^{(i)}_{s_{j}+l_{j}})']
 \approx \frac{1}{L}\sum^{L}_{k=1}[(x^{(i)}_{s_{j}+l_{j}}(k))'\otimes (x^{(i)}_{s_{j}+l_{j}}(k))'].
 \end{eqnarray*}
Accordingly, an alternative way to calculate $\triangle J^{(i)}(\cdot)$ is:
\begin{eqnarray*}
\triangle J^{(i)}(s_{j}, s_{j}+l_{j}) =\frac{1}{L}\sum^{L}_{m=1}\Big\{\sum^{s_{j}+l_j}_{t=s_{j}}(x^{(i)}_t(m))'[(K^{(i)})'RK^{(i)}
+(K^{(i)})'S+S'K^{(i)}+Q]x^{(i)}_t(m)\Big\}.
\end{eqnarray*}
From \cite{Murray et al}, there exits a matrix $W\in \mathbb{R}^{n^{2}\times N}$ with $rank(W)=N$ such that $vec(P)=Wvec^{+}(P)$. Thus, \rf{3.3} can be rewritten as
\begin{eqnarray}
(X^{(i)}W)vec^{+}(P^{(i+1)})=\mathcal{J}^{(i)},\label{3.4}
\end{eqnarray}
which leads to
\begin{eqnarray}
vec^{+}(P^{(i+1)})=(X^{(i)}W)^{-1}\mathcal{J}^{(i)}.\label{3.5}
\end{eqnarray}
Finally, we obtain $P^{(i+1)}$ by taking the inverse map of $vec^{+}(\cdot)$.

\section{An RL Algorithm for Partially Unknown System Model}
Due to the fact that it is impractical to require decision makers to obtain full knowledge of system dynamics, we consider the case where matrices $A$ and $B$ are unknown in system model \rf{2.1} and provide an RL method based on least-square estimation.

We first introduce the following notations. Define $\theta^*=\left[\begin{matrix}
	A^*&B^*
\end{matrix}\right]$ as the unknown parameter when designing controllers and $\theta^{(i)}=\left[\begin{matrix}
	A^{(i)}&B^{(i)}
\end{matrix}\right]$, $i=1,2,\cdots$ as the estimates for $\theta$ updated in the $i$-th iteration.  We choose the optimal feedback control based on the current estimate of $\theta^*$ and update the estimate based on the whole trajectories of the state dynamics.

The Lyapunov recursion and optimal feedback control gain associated with $\theta^{(i)}=\left[A^{(i)}\quad B^{(i)}\right]$ are given by:
\begin{align}
	\label{Pi+1}
	0=&	(A^{(i)}+B^{(i)}K^{(i)})'P^{(i+1)}(A^{(i)}+B^{(i)}K^{(i)})
	+\sigma^{2}(C+DK^{(i)})'P^{(i+1)}(C+DK^{(i)})\nonumber\\
	&+(K^{(i)})'RK^{(i)}+(K^{(i)})'S+S'K^{(i)}+Q-P^{(i+1)}£¬\\
	\label{Ki+1}
	\tilde{K}^{(i+1)}=&-(R+(B^{(i)})'P^{(i+1)}B^{(i)}+\sigma^2B'P^{(i+1)}D)^{-1}((B^{(i)})'P^{(i+1)}A^{(i)}+\sigma^2D'P^{(i+1)}C+S).
\end{align}
The key technique is to derive an $\ell_2-$regularized least-squares estimation for $\theta^*$ by collecting trajectories from online experiments. We update the estimates of parameter $\theta$ according to the following DT least-square estimator:
\begin{align}\label{tt1}
	\theta^{(i+1)}=\arg\min\limits_{\theta}\sum\limits_{k=1}^L\sum\limits_{t=s}^{s+l}\Vert x^{(i)}_{t+1}(k)-x_{t}^{(i)}(k)-\theta'Z_t^{(i)}(k)\Vert_2+tr(\theta'\theta),
\end{align}
where
\begin{align}\label{z1}
	Z_t^{(i)}(k)=\left[\begin{matrix}
		x^{(i)}_t(k)\\
		K^{(i)}x^{(i)}_t(k)
	\end{matrix}\right].
\end{align}
Obtaining the derivative with respect to $\theta$ of the right side of (\ref{tt1}) yields:
\begin{align}
	-\sum\limits_{k=1}^L\sum\limits_{t=s}^{s+l}Z_t^{(i)}(k)\left[\left(x^{(i)}_{t+1}(k)-x_{t}^{(i)}(k)\right)'-\left(Z_t^{(i)}(k)\right)'\theta\right]+\theta=0.
\end{align}
By dividing the both sides of equation above by $L$, it follows that $\theta^{(i+1)}$ can be updated by:
\begin{align}\label{theta}
	\theta^{(i+1)}=\left[\frac{1}{L}\sum\limits_{k=1}^L\sum\limits_{t=s}^{s+l}Z_t^{(i)}(k)\left(Z_t^{(i)}(k)\right)'+\frac{1}{L}I\right]^{-1}\left[\frac{1}{L}\sum\limits_{k=1}^L\sum\limits_{t=s}^{s+l}Z_t^{(i)}(k)\left(x^{(i)}_{t+1}(k)-x_{t}^{(i)}(k)\right)'\right]
\end{align}
Now we are ready to present Algorithm 2.

\begin{table}[H]
	
	\centering
	
	\begin{tabular}{l}
		
		\toprule
		
		\textbf{Algorithm 2} \emph{Policy Iteration based on Least-Square Estimation }\\
		
		\midrule
		
		1:\textbf{Initialization:} Select any $\theta^{(0)}=\left[\begin{matrix}A^{(0)}&B^{(0)}\end{matrix}\right]$ leading to stabilizer $K^{(0)}$ for system \rf{2.1}.  \\
		2. For $i=0,1,\cdots$, do: \\
		3. Obtain local state trajectories $x^{(i)}_t$ on $\left[s,s+l+1\right]$ by running system \rf{2.1} with ${K}^{(i)}$.
		\\
		4. Policy Evaluation: Obtain ${P}^{(i+1)}$ by
		\\$
		\xi'{P}^{(i+1)}\xi-\mathbb{E}_s\left[\left(x^{(i)}_{s+l+1}\right)'{P}^{(i+1)}x^{(i)}_{s+l+1}\right]=\mathbb{E}_s\left\{\sum_{t=s}^{s+l}\left(x_t^{(i)}\right)'\left[R+\left({K}^{(i)}\right)'S+S'{K}^{(i)}+R\right]x^{(i)}_t\right\}.
		$\\
		5. Policy Improvement: Update: $K^{(i+1)}$ by (\ref{Ki+1}).\\
		6. Update $\theta^{(i+1)}$ by (\ref{theta}). \\
		7. {until} $\|P^{(i+1)}-P^{(i)}\|<\delta$.\\
		\bottomrule
		%
		%
		%
			%
			%
			
			
		\end{tabular}
		
	\end{table}

\section{Numerical Example}
In this section, the performance of Algorithm is demonstrated by the following numerical examples.
\subsection{Example 1}
We first implement Algorithm $1$ in a discrete-time system where $n=2$, $m=2$ and system matrices $A$, $B$, $C$, $D$ are given by:
$$
A=\left[\begin{matrix}2& 1\\
	0&2\end{matrix}\right],\quad
B=\left[\begin{matrix}
	1&0\\
	-0.5&1
\end{matrix}\right],\quad
C = \left[\begin{matrix}1 & 0\\
	0.5 &1
\end{matrix}\right],\quad
D = \left[\begin{matrix}1& 0.5\\
	0 &1\end{matrix}\right].
$$
The coefficient matrices of cost functional are set as:
$$
Q = \left[
\begin{matrix}10& 5\\
	5 &10
\end{matrix}
\right],\quad
S = \left[\begin{matrix}
	1& 0\\
	0.5 &1\end{matrix}\right],\quad
R=\left[\begin{matrix}
	10&0\\
	0&10
\end{matrix}\right].
$$
For initialization, we choose $K^{(0)}$ as a stabilizer which drives $x_{t}$ to converges to a neighborhood of zero when $t$ approaches to infinity. Here, $K^{(0)}$ is specified as $\left[\begin{matrix}-0.4 &   3.8\\
	-0.5   &-1.4\end{matrix}\right]$. By substituting $K^{(0)}$ into $(\ref{kk})$, we attain the trajectories of $\left[\begin{matrix}
	x_0,x_1,\cdots,x_{50}
\end{matrix}\right]=X=\left[\begin{matrix}
	X_1\\X_2
\end{matrix}\right]$. As shown by Fig.~\ref{fig1.1}, $K^{(0)}$ ensures the state trajectories to be stabilizable. In this case, $P$ contains $N=\displaystyle\frac{n(n+1)}{2}=3$ parameters to be solved. In order to reinforce $\triangle J^{(i)}(s_{j}, s_{j}+l_{j})$, $j=1,2,3,$ we let $x_0$ be $\left[\begin{matrix}
	3\\7
\end{matrix}\right
]$, $\left[\begin{matrix}
	2\\18
\end{matrix}\right
]$,
$\left[\begin{matrix}
	14\\3
\end{matrix}\right
]$
respectively and collect the state information from $t=0,1,\cdots,200$. By recursively implementing Algorithm $1$ for $10$ times, it turns out that
$$
P^*=P^{(10)}=\left[\begin{matrix}
	86.3101&159.5861\\
	159.5861&419.6332
\end{matrix}\right],
$$
and the optimal control is
$$
K^*=K^{(10)}=\left[\begin{matrix}
	-0.6250 &   1.4830\\
	-0.6568  & -1.6745
\end{matrix}\right].$$

Under $K^*$, the optimal trajectories $X^*=\left[\begin{matrix}
	X_1^*\\X_2^*
\end{matrix}\right]$ are depicted by Fig.~\ref{fig1.2}, which reach the stable state faster compared to the trajectories under $K^{(0)}$.
We also compare the calculated solution $P^*$ and the real solution to ARE and derive the error between them, which is given by
$$
\mathcal{R}(P^*)=10^{-12}*\left[\begin{matrix}
	-0.3126&-0.8242\\-0.5969&0.1137
\end{matrix}\right].
$$
To further demonstrate the effectiveness of Algorithm $1$, we assign different values to $R$ and let other system parameters remain unchanged.

When
$
R=\left[\begin{matrix}
	0&0\\0&0
\end{matrix}\right],
$ after $10$ iterations, the solution to ARE and the optimal control calculated by Algorithm $1$ are given by
$$
P^*=\left[\begin{matrix}
	28.9136&49.9422\\
	49.9422&106.2917	
\end{matrix}\right],
$$
$$
K^*=\left[\begin{matrix}
	-0.5103&2.0453\\
	-0.7384&-1.9275
\end{matrix}\right].
$$
Fig.~\ref{fig2.2} shows the optimal trajectories $X^*$. For comparison, we also calculate the standard solution to the SARE and denote it as $P$.
Then the error between $P^*$ and the real solution to ARE is as follows
$$
\mathcal{R}(P^*)=10^{-11}*\left[\begin{matrix}
	0.0728&0.2480\\
	0.2423&0.4761
\end{matrix}\right].
$$

When
$
R=\left[\begin{matrix}
	0&0\\0&-5
\end{matrix}\right],
$ after $10$ iterations, the solution to ARE and the optimal control calculated by Algorithm $1$ are given by
$$
P^*=\left[\begin{matrix}
	28.1154&47.8493\\
	47.8493&100.8027	
\end{matrix}\right],
$$
$$
K^*=\left[\begin{matrix}
	-0.5154&2.0325\\
	-0.7432&-1.9407
\end{matrix}\right].
$$
The optimal trajectories $X^*$ are illustrated in Fig.~\ref{3.2}, which confirms that Algorithm $1$ is still valid for the cost function with negative coefficient matrices. The error between $P^*$ and the real solution to ARE can be obtained as follows
$$
\mathcal{R}(P^*)=10^{-11}*\left[\begin{matrix}
	-0.0288&-0.0689\\
	-0.0803&-0.1194
\end{matrix}\right].
$$
\begin{figure}[H]
	\begin{center}
		\includegraphics[width=8.0cm]{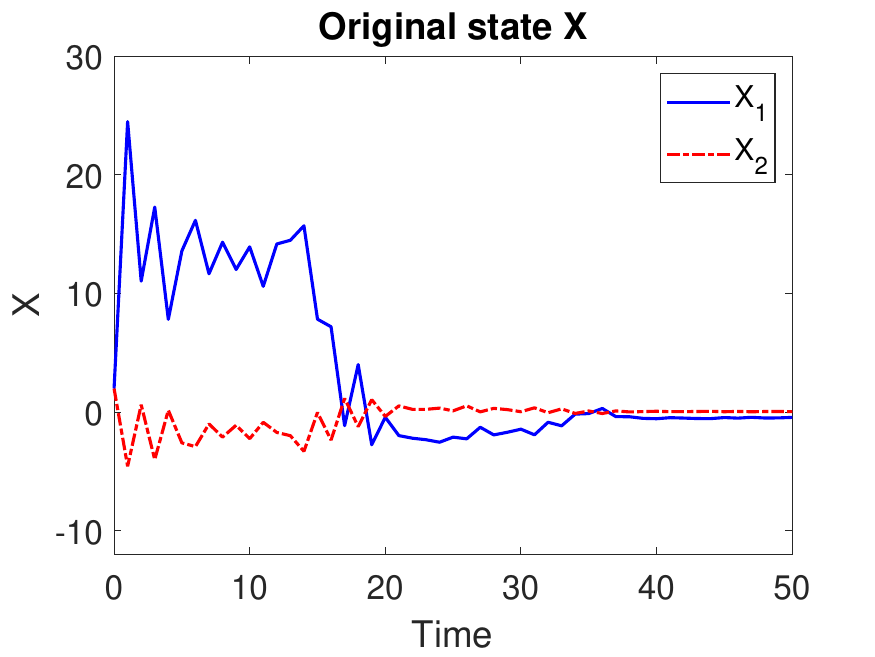}
	\end{center}
	\caption{State trajectories under $K^{(0)}$}\label{fig1.1}
\end{figure}
\begin{figure}[H]
	\begin{center}
		\includegraphics[width=8.0cm]{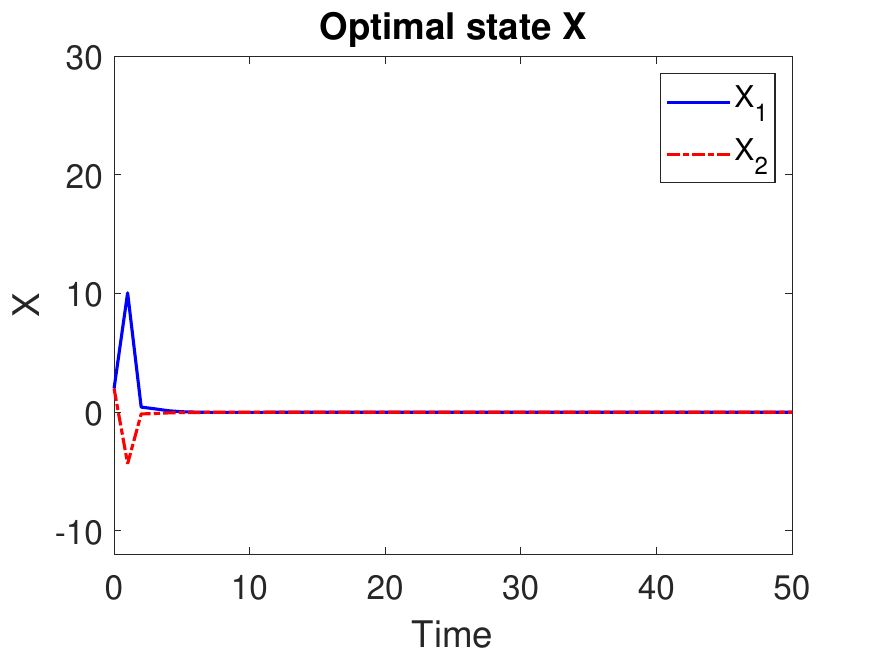}
	\end{center}
	\caption{State trajectories under optimal control with a positive $R$}\label{fig1.2}
\end{figure}
\begin{figure}[H]
	\begin{center}
		\includegraphics[width=8.0cm]{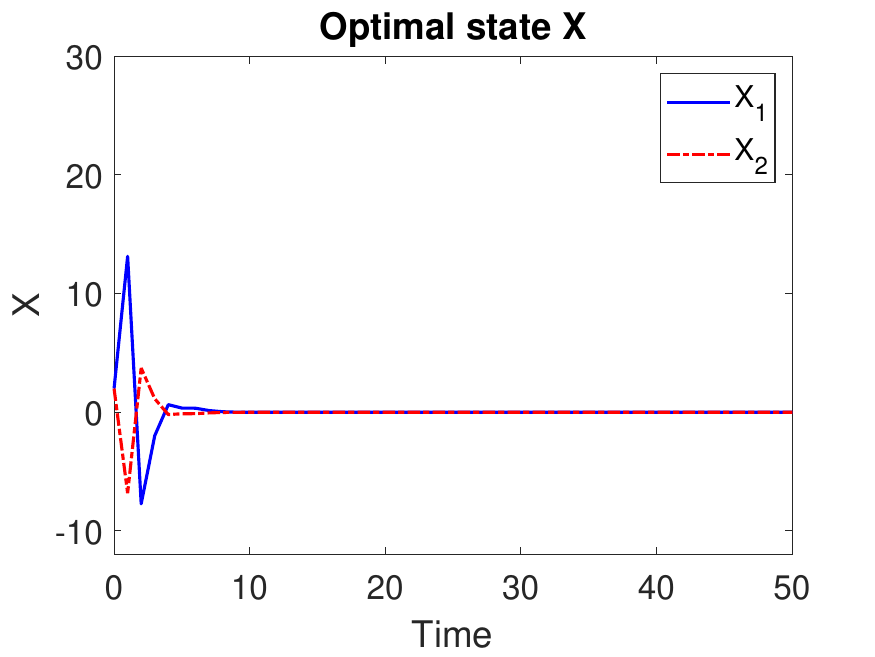}
	\end{center}
	\caption{State trajectories under optimal control with a semi-positive $R$}\label{fig2.2}
\end{figure}
\begin{figure}[H]
	\begin{center}
		\includegraphics[width=8.0cm]{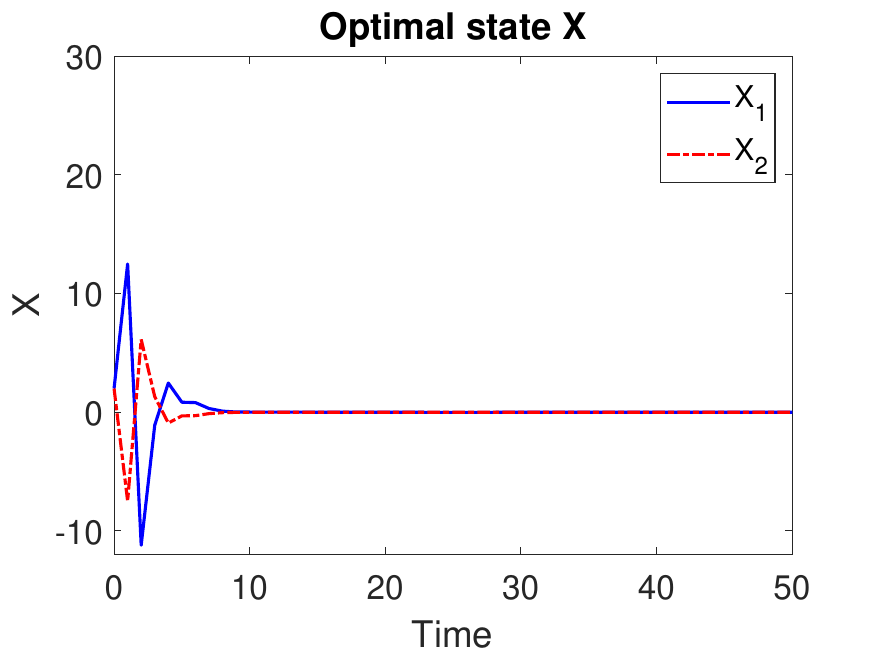}
	\end{center}
	\caption{State trajectories under optimal control with a negative $R$}\label{fig3.2}
\end{figure}

\subsection{Example 2}
We also apply Algorithm 1 to a discrete-time linear system, where the dimension of both state and input control is three and system matrices are specified as:
\begin{align*}
A=\left[\begin{matrix}
	2&1&0\\
	0&2&0\\
	1&0&1
\end{matrix}\right],\quad
B=\left[\begin{matrix}
	1&0&1\\
	-0.5&1&0\\
	0&1&1
\end{matrix}\right],
\quad
C=\left[\begin{matrix}
	1&0&0\\
	0.5&1&1\\
	0&0&1
\end{matrix}\right],\quad
D=\left[\begin{matrix}
	1&0.5&0\\
	0&1&1\\
	0&0&1
\end{matrix}\right],
\end{align*}
\begin{align*}
	Q=\left[\begin{matrix}
		10&5&0\\
		5&10&0\\
		0&0&1
	\end{matrix}\right]\quad
S=\left[\begin{matrix}
	1&0&1\\
	0.5&1&0\\
	0&1&1
\end{matrix}\right],\quad
R=\textbf{diag}\{\begin{matrix}
	10&10&100
\end{matrix}\}.
\end{align*}
In this case, the solution to corresponding SARE contains $\displaystyle\frac{(n+1)n}{2}=\frac{4\times3}{2}=6$ independent entries which need to be evaluated. Hence, for updating $P^{(i)}$, $i=1,2,\cdots$, we collect experiment data from $6$ paths of state trajectories over the time interval of $\left[0,200\right]$. And the initial states are respectively given by:
\begin{align*}\left[\begin{matrix}
	1.69\\1.13\\-0.59\end{matrix}\right], \quad\left[\begin{matrix}
	0.11\\0.75\\-2.10\end{matrix}\right], \quad\left[\begin{matrix}
	0.10\\0.35\\0.58\end{matrix}\right],\quad \left[\begin{matrix}
	-0.08\\0.50\\1.14\end{matrix}\right], \quad\left[\begin{matrix}
	0.10\\-1.91\\0.32\end{matrix}\right], \quad\left[\begin{matrix}
-2.00\\0.30\\0.07\end{matrix}\right].
\end{align*}
Accordingly, the initial feedback control is chosen as
\begin{align*}
	K^{(0)}=\left[\begin{matrix}
 -0.6&   -5.8  &  0.8\\
 -0.3 &  -4.8 &   0.4\\
 -0.7  &  4.8 &  -0.8
\end{matrix}\right].
\end{align*}
Figure 5 reveals that when system $(A,B,C,D)$ is governed by $K^{(0)}$, the $i$-th element of the state trajectories, denoted by $X_i$, for $i=1,2,3$, eventually converge to $0$.

Executing Algorithm 1 for $10$ steps leads to $P^{(10)}$ and $K^{(10)}$, which are the estimates of SARE and optimal control policy, that is
\begin{align*}
	P^{(10)}=& \left[\begin{matrix}44.2609  & 39.3667 &  -3.1271\\
	39.3667  &145.7899  &-30.5478\\
	-3.1271  &-30.5478 &  12.4739\end{matrix}\right],\\	
	K^{(10)} =&\left[\begin{matrix}	
	-1.3493 &  -0.0494&   -0.2034\\
	-0.7136 &  -2.3848  &  0.0886\\
	-0.1755 &  -0.0418&   -0.0554	
\end{matrix}\right],
\end{align*}
which still remain stable after $10$ iterations. Let $P^{*}=P^{(10)}$ and $K^*=K^{(10)}$, the state trajectories driven by $K^*$ are described in Figure 6, where the steady state can be reached after fewer iterations than Figure 5.
For comparison, we also calculate the standard solution to the SARE and denote it as $P$. The estimate error between $P$ and $P^*$ is given by
\begin{align*}
\mathcal{R}(P^*)= P-P^*=10^{-12}\ast\left[\begin{matrix}
0.0284 &   0.0284 &   0.0018\\
-0.1137   &      0   & 0.0213\\
0.0160&    0.0107   &      0
\end{matrix}\right].
\end{align*}
When matrix $R$ is modified as a semi positive-definite matrix, a new $K^*$ can be calculated. Figure 7 displays the state trajectories under $K^*$.
\begin{figure}[H]
	\begin{center}
		\includegraphics[width=8.0cm]{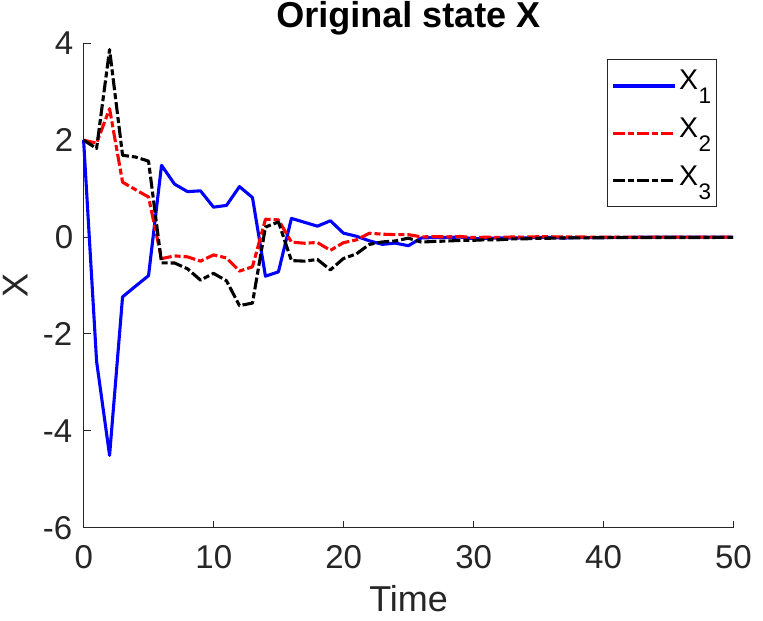}
	\end{center}
	\caption{State trajectories under $K^{(0)}$}\label{fig5}
\end{figure}
\begin{figure}[H]
	\begin{center}
		\includegraphics[width=8.0cm]{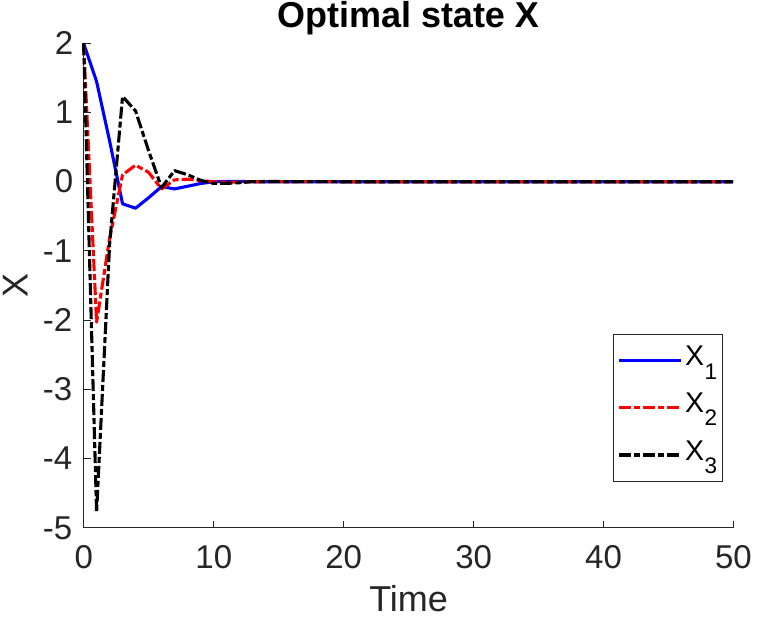}
	\end{center}
	\caption{State trajectories under $K^{*}$ with a positive $R$}\label{fig6}
\end{figure}
\begin{figure}[H]
	\begin{center}
		\includegraphics[width=8.0cm]{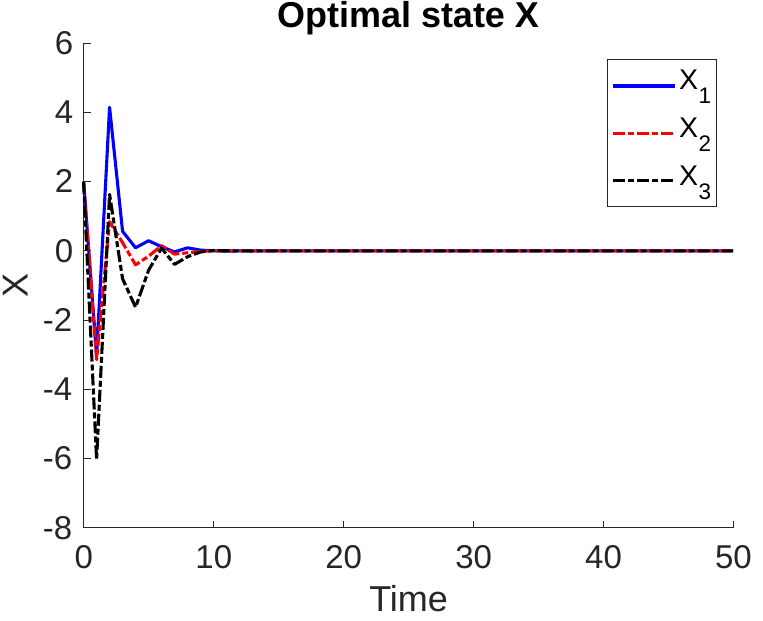}
	\end{center}
	\caption{State trajectories under $K^{*}$ with a semi-positive $R$}\label{fig7}
\end{figure}
\section{Conclusion}
This paper explores the discrete-time stochastic LQ control problem, presenting a novel online RL method. By leveraging state trajectories over short time intervals, the algorithm directly approaches optimal control policies without modeling the system's inner structure. Key contributions include the avoidance of solving the Riccati equation in policy evaluation, using Bellman dynamic programming for reduced computational complexity. The algorithm effectively handles discrete-time stochastic systems with multiplicative noises in both inputs and states. Demonstrating stability without system identification, it proves beneficial for systems with a stabilizable initial controller. Notably, the algorithm's policy evaluation only necessitates trajectory information within arbitrary time intervals, promising efficient and versatile applications.

\bibliographystyle{plain}        

\end{document}